\documentclass[11pt]{article}

\usepackage[margin=1in]{geometry}
\usepackage[T1]{fontenc}
\usepackage{lmodern}
\usepackage{microtype}
\usepackage{amsmath,amssymb,amsthm,mathtools}
\usepackage{enumitem}
\usepackage{hyperref}
\usepackage[nameinlink,noabbrev]{cleveref}
\usepackage{xcolor}

\hypersetup{
  colorlinks=true,
  linkcolor=blue!45!black,
  citecolor=blue!45!black,
  urlcolor=blue!45!black,
  pdftitle={Linear Turan Numbers of Four-Edge Uniform Paths via Incidence Rank},
  pdfauthor={Mahesh Ramani}
}

\setlist{nosep,leftmargin=1.6em}
\newtheorem{theorem}{Theorem}[section]
\newtheorem{lemma}[theorem]{Lemma}
\newtheorem{corollary}[theorem]{Corollary}

\theoremstyle{definition}

\theoremstyle{remark}

\DeclareMathOperator{\rank}{rank}
\DeclareMathOperator{\exlin}{ex^{\mathrm{lin}}}
\newcommand{\R}{\mathbb R}
\newcommand{\cF}{\mathcal F}
\newcommand{\cA}{\mathcal A}
\newcommand{\cS}{\mathcal S}
\newcommand{\one}{\mathbf 1}

\title{\bfseries Linear Tur\'an Numbers of Four-Edge Uniform Paths\\via Incidence Rank}
\author{Mahesh Ramani}
\date{\today}

\begin{document}
\maketitle

\begin{abstract}
Let $P_4^r$ denote the $r$-uniform expansion of the graph path with four edges.  A conjecture of Adak and Verma asserts that every $n$-vertex linear $r$-uniform $P_4^r$-free hypergraph has at most $(r+1)n/r$ edges, with equality precisely for vertex-disjoint unions of Steiner systems $S(2,r,r^2)$.  The conjecture is proved for every $r\ge2$.

The main ingredient is an incidence-rank inequality.  If $N(H)$ is the edge--vertex incidence matrix of a linear $r$-uniform hypergraph whose line graph is a cograph, then
\[
 (r+1)\rank_{\R}N(H)\ge r|E(H)|.
\]
Equality holds exactly when every edge-containing component is an $S(2,r,r^2)$.  The proof follows the join decomposition of a connected cograph.  At each join node, the row-difference spaces of the co-components are mutually orthogonal, and the possible rank defect is determined by a distinguished collection of balanced co-components.  Perron--Frobenius theory identifies the smallest balanced pieces as parallel classes of $r$ disjoint $r$-sets, while an orthogonality argument bounds their number by $r+1$.  The equality case then reconstructs the Steiner system.  Since $\rank N(H)\le |V(H)|$, the linear Tur\'an bound and its equality characterization follow.
\end{abstract}

\section{Introduction}

A hypergraph is \emph{linear} if two distinct hyperedges meet in at most one vertex.  For a linear $r$-uniform hypergraph $F$, let $\exlin_r(n,F)$ denote the maximum number of edges in an $n$-vertex linear $r$-uniform hypergraph containing no copy of $F$.  The terminology and systematic study of linear Tur\'an numbers were developed for linear cycles by Collier-Cartaino, Graber, and Jiang \cite{CollierCartainoGraberJiang2018}.  For $r=2$, the bound below is the classical Erd\H{o}s--Gallai bound for a path on five vertices \cite{ErdosGallai1959}.  Gy\'arf\'as, Ruszink\'o, and S\'ark\"ozy initiated the corresponding study of acyclic linear triple systems and determined, among other small cases, the sharp bound for the four-edge $3$-uniform path \cite{GyarfasRuszinkoSarkozy2022}.  Zhang and Wang subsequently considered acyclic linear $4$-graphs and stated the analogous result for the four-edge $4$-uniform path \cite{ZhangWang2026}.  Adak and Verma identified a gap in that argument, supplied a new proof for $r=4$, and formulated the general conjecture treated below; they also proved several partial results under degree hypotheses \cite{AdakVerma2026}.  Thus the previously unresolved range is $r\ge5$, while the argument below applies uniformly to every $r\ge2$.

For a graph $G$, its $r$-uniform expansion is obtained by replacing every graph edge by an $r$-set, with the new vertices used for distinct graph edges pairwise disjoint.  Let $P_4^r$ be the expansion of the graph path with four edges.  Thus a copy of $P_4^r$ consists of four hyperedges $e_1,e_2,e_3,e_4$ such that consecutive pairs intersect, nonconsecutive pairs are disjoint, and the three consecutive intersection vertices are distinct.

The main extremal result settles the conjecture of Adak and Verma.

\begin{theorem}\label{thm:main}
For every $r\ge2$ and every linear $r$-uniform $P_4^r$-free hypergraph $H$,
\begin{equation}\label{eq:main-bound}
 |E(H)|\le \frac{r+1}{r}|V(H)|.
\end{equation}
Equality holds if and only if the edge-containing components of $H$ are Steiner systems $S(2,r,r^2)$ and $H$ has no isolated vertices.  Equivalently, the equality examples are vertex-disjoint unions of copies of $S(2,r,r^2)$.
\end{theorem}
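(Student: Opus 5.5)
The plan is to derive the theorem from the incidence-rank inequality described in the abstract: if the line graph of a linear $r$-uniform hypergraph $H$ is a cograph, then $(r+1)\rank_{\R}N(H)\ge r|E(H)|$, with equality exactly when every edge-containing component is an $S(2,r,r^2)$. I will take this inequality and its equality characterization as the main tool. What remains is to connect $P_4^r$-freeness to the cograph hypothesis, apply $\rank N(H)\le|V(H)|$, and sort out the equality case.

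\textbf{Step 1: the line graph is a cograph.} Let $L(H)$ be the line graph of $H$. I claim that if $H$ is linear and $P_4^r$-free, then $L(H)$ contains no induced path on four vertices. Suppose $e_1e_2e_3e_4$ is an induced $P_4$ in $L(H)$. Then consecutive edges meet, nonconsecutive edges are disjoint, and by linearity each consecutive intersection is a single vertex, say $v_{12},v_{23},v_{34}$. These vertices are distinct: for example, $v_{12}=v_{23}$ would lie in both $e_1$ and $e_3$, contradicting $e_1\cap e_3=\emptyset$.

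This configuration is a $P_3^r$, not yet a $P_4^r$, so I must extend it. The argument I would use is that $P_4^r$-freeness forces $L(H)$ to be $P_4$-free, since an induced four-vertex path of edges together with a fifth edge would otherwise give $P_4^r$. I expect the real content here is showing that an induced $P_4$ in $L(H)$ itself yields a copy of $P_4^r$.

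Indeed, for $P_4^r$ (four hyperedges, as in the paper's definition), an induced $P_4$ in $L(H)$ is literally four hyperedges with consecutive pairs intersecting, nonconsecutive pairs disjoint, and distinct intersection vertices. That is exactly a copy of $P_4^r$ by the definition in the introduction. So $P_4^r$-free linear $H$ has a $P_4$-free line graph, which means $L(H)$ is a cograph.

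\textbf{Step 2: the bound.} Apply the incidence-rank inequality, then use that $N(H)$ has $|V(H)|$ columns:
\[
r|E(H)|\le (r+1)\rank_{\R}N(H)\le (r+1)|V(H)|.
\]
This gives \eqref{eq:main-bound}.

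\textbf{Step 3: equality.} If equality holds in \eqref{eq:main-bound}, then both inequalities in Step 2 are tight.
\begin{itemize}
\item Tightness of the rank inequality forces every edge-containing component to be an $S(2,r,r^2)$.
\item Tightness of $\rank N(H)\le|V(H)|$ forces $N(H)$ to have full column rank. An isolated vertex gives a zero column, so there are none.
\end{itemize}

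For the converse, take a disjoint union of copies of $S(2,r,r^2)$.
\begin{itemize}
\item It is linear.
\item It is $P_4^r$-free, because any two blocks of an affine plane that share no point are parallel. In a $P_4^r$, $e_1\parallel e_3$ and $e_1\parallel e_4$ would give $e_3\parallel e_4$, contradicting $e_3\cap e_4\neq\emptyset$.
\item Each copy has $r^2$ vertices and $r(r+1)$ lines, so equality holds in \eqref{eq:main-bound}.
\end{itemize}

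The main obstacle is entirely the incidence-rank inequality, which the paper proves separately. At the level of this theorem, the only delicate points are the reduction in Step 1 and confirming that full column rank excludes isolated vertices.
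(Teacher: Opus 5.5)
Your proposal is correct and follows the paper's own deduction. You show that a linear $H$ is $P_4^r$-free exactly when $L(H)$ is a cograph (\cref{lem:line-cograph}), chain $r|E(H)|\le(r+1)\rank N(H)\le(r+1)|V(H)|$ via \cref{thm:rank}, and read off equality from the rank theorem's equality case together with full column rank, which excludes zero columns and hence isolated vertices. The only blemish is your intermediate remark in Step 1 that the four-edge configuration is a $P_3^r$ needing a fifth edge; that is false, since it is already a copy of $P_4^r$, as you then correctly observe, so that paragraph should simply be deleted.
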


An $S(2,r,r^2)$ has $r^2$ vertices, $r(r+1)$ blocks, and degree $r+1$, so its edge density is $(r+1)/r$.  It is $P_4^r$-free.  Indeed, fix a block $B$ and a point $x\notin B$.  Of the $r+1$ blocks through $x$, exactly $r$ meet $B$, one through each point of $B$, and the remaining block is the unique block through $x$ disjoint from $B$.  In a putative path $e_1,e_2,e_3,e_4$, the point $e_3\cap e_4$ lies outside $e_1$, whereas both $e_3$ and $e_4$ would be distinct blocks through that point and disjoint from $e_1$, a contradiction.  The equality statement in \cref{thm:main} is conditional on the existence of $S(2,r,r^2)$; in particular, it does not assert that equality is attainable for every $r$ or every $n$.

The proof of \cref{thm:main} is based on a stronger rank statement.  For a hypergraph $H$, let $N(H)$ be its $0$--$1$ edge--vertex incidence matrix, with rows indexed by $E(H)$ and columns indexed by $V(H)$.

\begin{theorem}[Incidence-rank theorem]\label{thm:rank}
Let $H$ be a linear $r$-uniform hypergraph, where $r\ge2$, and suppose that its line graph is a cograph.  If $m=|E(H)|$, then
\begin{equation}\label{eq:rank-bound}
 (r+1)\rank_{\R}N(H)\ge rm.
\end{equation}
Equality holds if and only if each edge-containing component of $H$ is a Steiner system $S(2,r,r^2)$.
\end{theorem}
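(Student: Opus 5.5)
We argue by induction on $m$, following the cotree of the line graph $L(H)$. Both sides of \eqref{eq:rank-bound} are additive over connected components of $H$, since $N(H)$ is block diagonal, and isolated vertices contribute nothing. So it suffices to treat a connected $H$ with $m\ge1$. For $m=1$ we get $(r+1)\cdot1\ge r$, which is strict. If $m\ge2$ and $H$ is connected, then $L(H)$ is a connected cograph on at least two vertices, hence a join: $E(H)=E_1\sqcup\dots\sqcup E_k$ with $k\ge2$, where the $E_i$ are the co-components (the vertex sets of the components of the complement of $L(H)$). Each $E_i$ spans a sub-hypergraph $H_i$ whose line graph is again a cograph, and every edge of $E_i$ meets every edge of $E_j$ for $i\ne j$.

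The key structural step is to decompose the row space at the join node. Write the rows of $N(H)$ as indicator vectors $\one_e\in\R^{V}$. Let $D_i$ be the span of the differences $\one_e-\one_f$ for $e,f\in E_i$. Then $\rank N(H)=\dim\bigl(\sum_i D_i+\mathrm{span}\{\one_{e_1},\dots,\one_{e_k}\}\bigr)$ for any representatives $e_i\in E_i$. For $e,f\in E_i$ and $g,h\in E_j$, linearity together with the join property gives $|e\cap g|=|f\cap g|=1$, so $\langle \one_e-\one_f,\one_g\rangle=0$. Hence $D_i\perp D_j$, and moreover $D_i$ is orthogonal to every row outside $E_i$. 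It follows that $\rank N(H)\ge\sum_i\dim D_i$, and $\dim D_i\in\{\rank N(H_i)-1,\ \rank N(H_i)\}$. Call $E_i$ \emph{balanced} when $\dim D_i=\rank N(H_i)-1$, that is, when $\one_{E_i}$-type constant vectors are not generated by differences. This is equivalent to the existence of a vector $w$ with $\langle \one_e,w\rangle=1$ for all $e\in E_i$ (an affine functional constant on the rows). Unbalanced pieces lose nothing, so the total defect is roughly the number $b$ of balanced co-components minus the (at most $r$-dimensional, but really small) contribution recovered from the $\one_{e_i}$. Using the induction hypothesis $(r+1)\rank N(H_i)\ge r|E_i|$, we get
\[
(r+1)\rank N(H)\ \ge\ r m-(r+1)\,b+(r+1)\,c,
\]
where $c\ge1$ counts the independent directions among the representatives modulo $\sum D_i$. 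We must therefore show that each balanced piece has slack, i.e.\ $(r+1)\rank N(H_i)-r|E_i|$ is large enough to pay for its defect, except in a controlled number of cases.

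The main obstacle is bounding the balanced pieces. I would first show that a balanced co-component with $(r+1)\rank N(H_i)-r|E_i|<r+1$ must be very small. Balancedness gives a weight $w$ with $N(H_i)w=\one$, and every edge of $E_i$ meets every edge of the other $E_j$, so the edges outside $E_i$ act as transversals. I would apply Perron--Frobenius to $N(H_i)^{\top}N(H_i)$, or to the adjacency matrix of $L(H_i)$, to show that the minimal such pieces are exactly parallel classes: $r$ pairwise disjoint $r$-sets, for which $\rank=r$, $|E_i|=r$, and the slack is $r$, so one unit short. I would then show that at most $r+1$ parallel-class co-components can coexist. The argument is an orthogonality one: their difference spaces are mutually orthogonal inside the space spanned by the $r^2$ vertices they cover, because each edge of one class meets each block of another in exactly one point. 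A dimension count then caps their number at $r+1$ and gives a net $c$ that compensates. With that bound in hand, the inequality follows.

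For equality, all the slack must vanish. Every co-component is then a parallel class, there are exactly $r+1$ of them, and they cover $r^2$ points with any two blocks from different classes meeting in one point. This is precisely an affine plane of order $r$, i.e.\ an $S(2,r,r^2)$. Conversely, for $S(2,r,r^2)$ the incidence matrix has rank $r^2=\frac{r}{r+1}\cdot r(r+1)$, which is a standard eigenvalue computation of $N^{\top}N=rI+J$.
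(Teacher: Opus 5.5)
Your outline has the right architecture (join node, orthogonal difference spaces, Perron--Frobenius, parallel classes, an $r+1$ cap by orthogonality, an affine plane at equality). However, the step that actually measures the rank loss is wrong.

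In the uniform setting your dichotomy collapses. Every row has coordinate sum $r\neq0$, and $D_i$ is exactly the kernel of the coordinate-sum functional on the row space of $H_i$. So $\dim D_i=\rank N(H_i)-1$ always, and your witness always exists ($w=\one/r$). Hence every co-component is ``balanced'' in your sense, and the remark that unbalanced pieces lose nothing is vacuous. Everything then reduces to a lower bound on $c$, which you leave at $c\ge1$. You even describe this contribution as at most $r$-dimensional, yet a star of $k$ edges through one vertex has $c=k$. With only $c\ge1$ the argument fails: $k\ge2$ single-edge co-components have total slack $k<(r+1)(k-1)$.

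The missing idea is to project onto $D^\perp$ with $D=D_1\oplus\cdots\oplus D_k$. All rows of $E_i$ share one projection $c_i$, with $\one^{\mathsf T}c_i=r$ and $c_i^{\mathsf T}c_j=1$ for $i\ne j$. A relation $\sum_i a_ic_i=0$ then forces $\sum_i a_i=0$ and $a_j(\|c_j\|^2-1)=0$, and any two $c_i$ of norm $1$ coincide. This yields the exact formula $\rank N(H)=\sum_i\rank N(H_i)-\max\{z-1,0\}$, where $z$ is the number of indices with $\|c_i\|^2=1$. That is the correct notion of balanced.

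Second, the bare induction hypothesis (slack $\ge0$ for each $H_i$) cannot pay for this defect. You need that a balanced co-component has full row rank, so its slack is exactly $|E_i|$. In the paper this comes from two facts: $c_i^{\mathsf T}\one_e=1$ for $e\in E_i$, and $c_i$ lies in the affine hull of those rows. Together they make the Gram matrix $(r-1)I-A(Q)$ of the centered rows $\one_e-c_i$ singular and positive semidefinite, where $Q$ is the \emph{disjointness} graph on $E_i$. Perron--Frobenius must be applied to $Q$, which is connected precisely because $E_i$ is a co-component. Your suggested matrices $N(H_i)^{\mathsf T}N(H_i)$ and $A(L(H_i))$ are reducible in exactly the critical case (a parallel class has edgeless line graph), so they give nothing.

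Finally, your equality analysis omits two cases: $z\le1$ with every co-component tight, and extra co-components coexisting with the $r+1$ parallel classes. The paper excludes both with the no-transversal lemma. The join forces any edge in another co-component to meet every block of an $S(2,r,r^2)$, which linearity forbids.
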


The identity
\[
 N(H)N(H)^{\mathsf T}=A(L(H))+rI
\]
shows that \cref{thm:rank} may equivalently be viewed as a sharp upper bound on the multiplicity of the adjacency eigenvalue $-r$ in cographs that arise as line graphs of linear $r$-uniform hypergraphs.  Rank and eigenvalue multiplicity questions for general cographs have been studied extensively; see, for example, Royle \cite{Royle2003}, Jacobs, Trevisan, and Tura \cite{JacobsTrevisanTura2018}, and Ghorbani \cite{Ghorbani2019}.  The argument here uses the additional incidence representation and linearity of the underlying hypergraph.

A linear hypergraph is $P_4^r$-free exactly when its line graph is a cograph.  The standard union--join decomposition of cographs, due to Seinsche \cite{Seinsche1974}, therefore provides the recursive structure for the proof.  At a join node, the row-difference spaces of the co-components are orthogonal.  The exact rank loss is measured by balanced co-components, whose disjointness graphs are controlled by Perron--Frobenius theory.  Minimal balanced co-components are parallel classes, and a dimension count permits at most $r+1$ of them.  This both proves the inequality and determines the equality case.  Finally, \cref{thm:main} follows from $\rank N(H)\le |V(H)|$.

\section{Cographs and incidence vectors}

The \emph{line graph} $L(H)$ has vertex set $E(H)$, with two vertices adjacent when the corresponding hyperedges intersect.  A graph is a \emph{cograph} if it has no induced path on four vertices.  The standard decomposition theorem of Seinsche \cite{Seinsche1974} states that every cograph with at least two vertices is disconnected or has disconnected complement.  In particular, if a cograph is connected and nontrivial, the connected components of its complement form a nontrivial join decomposition.

\begin{lemma}\label{lem:line-cograph}
A linear $r$-uniform hypergraph $H$ is $P_4^r$-free if and only if $L(H)$ is a cograph.
\end{lemma}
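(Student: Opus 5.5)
We prove the equivalence in its contrapositive form: $H$ contains a copy of $P_4^r$ if and only if $L(H)$ contains an induced path on four vertices. Both directions amount to reading the definitions through the dictionary ``hyperedges of $H$ = vertices of $L(H)$, intersecting = adjacent.''

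First suppose $e_1,e_2,e_3,e_4$ form a copy of $P_4^r$ in $H$. Consecutive pairs intersect, so $e_1e_2$, $e_2e_3$, $e_3e_4$ are edges of $L(H)$. Nonconsecutive pairs are disjoint, so $e_1e_3$, $e_2e_4$, $e_1e_4$ are non-edges. The four hyperedges are distinct, since for instance $e_1\cap e_3=\emptyset$ while each edge has $r\ge 2>0$ vertices. Hence $e_1e_2e_3e_4$ is an induced $P_4$ in $L(H)$, and $L(H)$ is not a cograph.

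Conversely, suppose $e_1e_2e_3e_4$ is an induced path in $L(H)$. Then $e_i\cap e_{i+1}\neq\emptyset$ for $i=1,2,3$. By linearity each such intersection is a single vertex, which we call $v_i$. Also $e_1\cap e_3$, $e_2\cap e_4$ and $e_1\cap e_4$ are empty. It remains to check that $v_1,v_2,v_3$ are distinct, since this is the only extra condition in the definition of a copy of $P_4^r$.
\begin{itemize}
\item If $v_1=v_2$, this vertex lies in $e_1\cap e_3$, contradicting $e_1\cap e_3=\emptyset$.
\item If $v_2=v_3$, it lies in $e_2\cap e_4=\emptyset$, again a contradiction.
\item If $v_1=v_3$, it lies in $e_1\cap e_4=\emptyset$, again a contradiction.
\end{itemize}
So the four hyperedges form a copy of $P_4^r$ exactly as described in the introduction.

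It is worth confirming that this matches the expansion definition, so that ``copy'' needs no further interpretation. The expansion of the graph path $x_0x_1x_2x_3x_4$ consists of four $r$-sets. The set for $x_{i-1}x_i$ contains $x_{i-1},x_i$ together with $r-2$ private new vertices. Our $e_1,\dots,e_4$ have exactly this structure: the distinct vertices $v_1,v_2,v_3$ play the role of $x_1,x_2,x_3$. A point of $e_1\setminus\{v_1\}$ plays the role of $x_0$, and a point of $e_4\setminus\{v_3\}$ plays the role of $x_4$; such points exist since $r\ge2$. All remaining vertices are private to their edge, because nonconsecutive edges are disjoint and consecutive edges meet only in $v_i$. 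So the configuration is isomorphic to $P_4^r$.

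The only non-formal point is the role of linearity in the converse direction. Linearity is what makes each $v_i$ well defined. Without it, consecutive edges could share several vertices, and the structure would not be an expansion, whose consecutive edges meet in exactly one vertex. The lemma therefore depends essentially on the linearity hypothesis. Beyond this, I expect no real obstacle.
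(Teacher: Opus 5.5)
Your proof is correct and follows the paper's argument. The forward direction is read off from the definitions. In the converse, linearity makes each consecutive intersection a single vertex, and any coincidence among $v_1,v_2,v_3$ would force a nonconsecutive pair to intersect. Your explicit check that the configuration is isomorphic to the expansion spells out a step the paper leaves implicit.
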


\begin{proof}
A copy of $P_4^r$ gives an induced $P_4$ in the line graph.  Conversely, suppose four hyperedges induce a graph path in $L(H)$.  Consecutive hyperedges intersect and nonconsecutive hyperedges are disjoint.  Linearity makes each consecutive intersection a single vertex.  Let $v_i=e_i\cap e_{i+1}$ for $i=1,2,3$.  No two of $v_1,v_2,v_3$ can coincide, since any such coincidence would make a nonconsecutive pair of hyperedges intersect.  Hence the four hyperedges form $P_4^r$.
\end{proof}

Each edge $e$ is identified with its incidence vector $x_e\in\{0,1\}^{V(H)}$.  Uniformity and linearity give
\begin{equation}\label{eq:gram}
 x_e^{\mathsf T}x_f=
 \begin{cases}
 r,&e=f,\\
 1,&ef\in E(L(H)),\\
 0,&ef\notin E(L(H)).
 \end{cases}
\end{equation}
Equivalently,
\[
 N(H)N(H)^{\mathsf T}=A(L(H))+rI.
\]
The argument will use the incidence vectors directly.

The following elementary observation will be used in the equality analysis.

\begin{lemma}[No universal transversal]\label{lem:no-transversal}
Let $\cS$ be an $S(2,r,r^2)$ contained in a linear hypergraph $H$, and let $f\in E(H)\setminus E(\cS)$.  Then $f$ cannot intersect every block of $\cS$.
\end{lemma}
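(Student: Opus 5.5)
Since $H$ is linear and $f$ is not a block of $\cS$, the edge $f$ meets each block of $\cS$ in at most one vertex. We therefore count the blocks met by $f$ through the points of $f$ that lie in the vertex set $V(\cS)$. Suppose, for a contradiction, that $f$ meets every block of $\cS$.

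Let $k=|f\cap V(\cS)|$. Then $k\le r$, because $|f|=r$. Each point of $\cS$ lies on exactly $r+1$ blocks, so the number of (point, block) incidences with the point in $f\cap V(\cS)$ is $k(r+1)$. Two distinct points of $f\cap V(\cS)$ cannot lie on a common block. If they did, that block and $f$ would share two vertices, contradicting linearity, since $f\notin E(\cS)$ and $f$ is therefore distinct from every block. Hence the $k$ pencils of $r+1$ blocks are pairwise disjoint, and $f$ meets exactly $k(r+1)$ blocks.

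The design has $r(r+1)$ blocks in total. If $f$ meets all of them, then $k(r+1)=r(r+1)$, so $k=r$ and $f\subseteq V(\cS)$. The $r$ points of $f$ are distinct points of $\cS$, so any two of them lie on a common block of $\cS$, by the defining property of $S(2,r,r^2)$. Since $r\ge2$, there are at least two such points. The block through two of them then shares at least two vertices with $f$, and it differs from $f$, contradicting linearity. So $f$ cannot meet every block.

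The only delicate points are the following. The pencil-disjointness step uses linearity between $f$ and the blocks, and it uses the fact that in a linear $H$ an edge outside $\cS$ differs from every block as a set. The final step uses $r\ge2$. In fact, once one notes that any two points of $f\cap V(\cS)$ already force a contradiction, one gets $k\le1$, so $f$ meets at most $r+1<r(r+1)$ blocks, which gives a shorter version of the same argument.
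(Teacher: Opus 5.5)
Your proof is correct and follows the paper's argument. Both count the blocks met by $f$ through the points of $f\cap V(\cS)$, using linearity and the fact that each point lies on $r+1$ blocks, to force $f\subseteq V(\cS)$, and then obtain a contradiction from the block through two points of $f$. Your closing remark is also valid and makes the counting unnecessary: linearity together with the $2$-design property gives $|f\cap V(\cS)|\le1$ directly, so $f$ meets at most $r+1<r(r+1)$ blocks.
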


\begin{proof}
The design has $r(r+1)$ blocks, and every point of the design lies in $r+1$ blocks.  If $f$ met every block, linearity would force every block to meet $f$ in exactly one point.  Counting incidences between the points of $f$ and the blocks of $\cS$ gives
\[
 r(r+1)
 =\sum_{v\in f\cap V(\cS)}(r+1)
 \le r(r+1).
\]
Thus every point of $f$ lies in $V(\cS)$.  Any two distinct points of $f$ lie together in a unique block of $\cS$, and that block would meet $f$ in at least two points, contradicting linearity.
\end{proof}

\section{The join-node rank decomposition}

Let $\cF$ be a nonempty edge family whose intersection graph $G$ is a connected cograph with at least two vertices.  For a subfamily $\cA\subseteq\cF$, write $N(\cA)$ for the matrix whose rows are the incidence vectors of the edges in $\cA$.  Let
\[
 \cF=\cF_1\mathbin{\dot\cup}\cdots\mathbin{\dot\cup}\cF_p,
 \qquad p\ge2,
\]
be the connected components of $\overline G$.  Thus every edge in $\cF_i$ intersects every edge in $\cF_j$ for $i\ne j$.  Write
\[
 m_i=|\cF_i|,
 \qquad R_i=\operatorname{span}\{x_e:e\in\cF_i\},
 \qquad \rho_i=\dim R_i,
\]
and define the row-difference space
\[
 D_i=\operatorname{span}\{x_e-x_f:e,f\in\cF_i\}.
\]
All vector spaces and ranks are over $\R$.

\begin{lemma}[Orthogonal join decomposition]\label{lem:join-rank}
The spaces $D_1,\ldots,D_p$ are mutually orthogonal, and
\[
 \dim D_i=\rho_i-1.
\]
Let $D=D_1\oplus\cdots\oplus D_p$.  For $e\in\cF_i$, let $c_i$ be the orthogonal projection of $x_e$ onto $D^\perp$.  This is independent of the choice of $e\in\cF_i$, and
\begin{equation}\label{eq:ci-products}
 \one^{\mathsf T}c_i=r,
 \qquad c_i^{\mathsf T}c_j=1\quad(i\ne j).
\end{equation}
If
\[
 z=\bigl|\{i:\|c_i\|^2=1\}\bigr|,
\]
then
\begin{equation}\label{eq:rank-decomposition}
 \rank N(\cF)
 =\sum_{i=1}^p\rho_i-\max\{z-1,0\}.
\end{equation}
Moreover, all vectors $c_i$ with $\|c_i\|^2=1$ are equal.
\end{lemma}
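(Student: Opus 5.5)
The plan is to do everything with the Gram relations \eqref{eq:gram} and the cross-intersection property of the join. \emph{Orthogonality.} For $e,f\in\cF_i$ and $g,h\in\cF_j$ with $i\ne j$, every edge of $\cF_i$ meets every edge of $\cF_j$ in exactly one point. Hence
\[
(x_e-x_f)^{\mathsf T}(x_g-x_h)=1-1-1+1=0,
\]
so $D_i\perp D_j$. \emph{Dimension.} We have $R_i=D_i+\R x_e$ for any fixed $e\in\cF_i$, so $\dim D_i\in\{\rho_i-1,\rho_i\}$. Take any $g\in\cF_j$ with $j\ne i$. Then $x_g^{\mathsf T}x_e=1$, while $x_g^{\mathsf T}d=0$ for every $d\in D_i$. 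So $x_e\notin D_i$, and therefore $\dim D_i=\rho_i-1$.

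\emph{The vectors $c_i$.} Write $P$ for the orthogonal projection onto $D^\perp$. Differences of rows of $\cF_i$ lie in $D_i\subseteq D$, so $Px_e$ does not depend on $e\in\cF_i$; call it $c_i$. Next, $\one$ is orthogonal to every difference of two $r$-sets, so $\one\in D^\perp$ and $\one^{\mathsf T}c_i=\one^{\mathsf T}x_e=r$. For $i\ne j$, write $x_e=c_i+d$ with $d\in D$, and expand $x_g$ similarly. It is enough to check that $x_e^{\mathsf T}x_g=c_i^{\mathsf T}c_j$. The cross terms involving $D$ vanish once $x_e$ is orthogonal to $D_k$ for every $k\ne i$. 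Orthogonality of $x_e$ to $D_j$ holds by the cross-intersection computation above. For $k\ne i,j$ it holds as well, since $x_e$ meets every edge of $\cF_k$ exactly once. Hence $x_e-c_i$ is the component of $x_e$ in $D_i$, and likewise $x_g-c_j\in D_j$. Because $x_g\perp D_i$ and $c_j\perp D$, we get $c_i^{\mathsf T}c_j=x_e^{\mathsf T}x_g=1$.

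\emph{Rank formula.} The row space of $N(\cF)$ is $D\oplus C$ with $C=\operatorname{span}\{c_1,\dots,c_p\}$ inside $D^\perp$. This gives $\rank N(\cF)=\sum(\rho_i-1)+\dim C$, so it remains to show $\dim C=p-\max\{z-1,0\}$. The Gram matrix of the $c_i$ is
\[
M=J+\operatorname{diag}(a_1-1,\dots,a_p-1),\qquad a_i=\|c_i\|^2.
\]
By Cauchy--Schwarz, $1=c_i^{\mathsf T}c_j\le\sqrt{a_ia_j}$, and I will show each $a_i\ge1$. Among pairs $i\ne j$, Cauchy--Schwarz alone does not give this, so I will use $\one$ instead. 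Since $\one\in D^\perp$, the projection of $\one$ onto $C$ gives a bound comparable to $a_i$. More directly, $a_i=x_e^{\mathsf T}c_i$, and I would expect $a_i\ge1$ to follow by writing $c_i=x_e-d_i$ with $d_i\in D_i$ and minimising over $D_i$. This step needs care, and it is where I expect the main obstacle.

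Granting $a_i\ge1$, let $\delta_i=a_i-1\ge0$, and note that $z$ counts the indices with $\delta_i=0$. A positive semidefinite diagonal matrix plus $J$ has rank equal to the number of positive $\delta_i$, plus $1$ if some $\delta_i=0$. This holds because $\one$ restricted to the zero coordinates contributes one extra dimension, and the kernel consists of vectors supported on the zero set with coordinate sum zero. For $z\ge1$ this gives $\dim C=p-z+1$. For $z=0$, $M$ is positive definite: indeed $v^{\mathsf T}Mv=(\one^{\mathsf T}v)^2+\sum\delta_iv_i^2>0$. Finally, if $a_i=a_j=1$ then $\|c_i-c_j\|^2=1+1-2=0$, so all unit-norm $c_i$ coincide.

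For the obstacle $a_i\ge1$, the first attempt is simpler. Since $p\ge2$, pick $j\ne i$; then $1=c_i^{\mathsf T}c_j\le\|c_i\|\,\|c_j\|$. This only bounds the product $a_ia_j$, so it must be supplemented. I would try the vector $u=\one/r$: it lies in $D^\perp$ and satisfies $u^{\mathsf T}c_k=1$ for all $k$. So every $c_k$ lies in the affine hyperplane $\{y:u^{\mathsf T}y=1\}$ within $D^\perp$, which gives $a_k\ge1/\|u\|^2$. That bound is too weak. I would therefore seek a vector $w\in C$ with $w^{\mathsf T}c_k=1$ for all $k$ and $\|w\|\le1$, for example the projection of $c_j$ adjusted appropriately. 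This is the step I expect to fight with.
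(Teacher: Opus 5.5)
\paragraph{Verdict: there is a genuine gap, in the computation of $\dim C$.}
Everything up to the reduction $\rank N(\cF)=\sum_i(\rho_i-1)+\dim C$ is sound. Your proof of $\dim D_i=\rho_i-1$, using an edge from another co-component, is a fine substitute for the paper's coordinate-sum argument.

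Your rank count for $M=J+\operatorname{diag}(\delta_1,\dots,\delta_p)$ is valid only when every $\delta_i=\|c_i\|^2-1\ge0$, and you leave this unproved. It is in fact false, so it cannot be proved. For $r=2$ take the edges $e_1=\{v,p_1\}$, $e_2=\{v,p_2\}$, $e_0=\{q,q'\}$, $g=\{v,q\}$.
\begin{itemize}
\item The line graph is the paw, a connected cograph, with co-components $\cF_1=\{e_0,e_1,e_2\}$ and $\cF_2=\{g\}$.
\item Here $D=D_1$, so $c_1$ is the minimum-norm point of the affine hull of $x_{e_0},x_{e_1},x_{e_2}$.
\item That point is $c_1=\tfrac27x_{e_1}+\tfrac27x_{e_2}+\tfrac37x_{e_0}$. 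One checks $c_1\perp D_1$, $\one^{\mathsf T}c_1=2$ and $c_1^{\mathsf T}x_g=1$.
\item Its norm satisfies $\|c_1\|^2=6/7<1$.
\end{itemize}
The same happens for every $r$. Take $k>(r-1)^2$ edges through a common point, one edge disjoint from all of them, and one edge meeting all of them. So neither minimising over $D_i$ nor finding a vector $w$ of norm at most one can succeed.

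\paragraph{The missing idea.}
No sign information is needed. You already hold the right tool in $\one^{\mathsf T}c_k=r$, but you used it only for a norm bound.
\begin{itemize}
\item Suppose $Mv=0$. Then $\|\sum_kv_kc_k\|^2=v^{\mathsf T}Mv=0$, so $\sum_kv_kc_k=0$.
\item Pairing with $\one$ gives $r\,\one^{\mathsf T}v=0$. Hence $Jv=0$, and therefore $\delta_kv_k=0$ for every $k$.
\item Conversely, any $v$ supported on $\{k:\delta_k=0\}$ with $\one^{\mathsf T}v=0$ satisfies $Mv=Jv+\operatorname{diag}(\delta)v=0$.
\end{itemize}
Thus $\dim\ker M=\max\{z-1,0\}$ whatever the signs of the $\delta_k$, which gives $\dim C=p-\max\{z-1,0\}$.

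This is exactly the paper's argument. From a relation $\sum_ia_ic_i=0$ it takes coordinate sums to get $\sum_ia_i=0$. It then pairs with $c_j$ to obtain $a_j(\|c_j\|^2-1)=0$.
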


\begin{proof}
For $i\ne j$, every row from $\cF_i$ has inner product one with every row from $\cF_j$.  Hence
\[
 (x_e-x_f)^{\mathsf T}x_g=0
 \qquad(e,f\in\cF_i,\ g\in\cF_j),
\]
so $D_i\perp R_j$, and in particular the $D_i$ are mutually orthogonal.

Every vector in $D_i$ has coordinate sum zero.  Conversely, if
\[
 y=\sum_{e\in\cF_i}\alpha_e x_e\in R_i
 \quad\text{and}\quad
 \one^{\mathsf T}y=0,
\]
then $r\sum_e\alpha_e=0$.  After fixing $f\in\cF_i$,
\[
 y=\sum_{e\ne f}\alpha_e(x_e-x_f)\in D_i.
\]
Thus $D_i$ is the kernel in $R_i$ of the nonzero coordinate-sum functional, proving $\dim D_i=\rho_i-1$.

Rows in the same family $\cF_i$ differ by a vector in $D_i\subseteq D$, so they have the same projection $c_i$ onto $D^\perp$.  A row in $R_i$ is orthogonal to $D_j$ for $j\ne i$, so $x_e-c_i\in D_i$.  Since every vector in $D$ has coordinate sum zero, $\one^{\mathsf T}c_i=r$.  Orthogonality of the decomposition and the cross-intersection condition give $c_i^{\mathsf T}c_j=1$ for $i\ne j$.

The total row space is the orthogonal direct sum
\[
 D\oplus\operatorname{span}\{c_1,\ldots,c_p\}.
\]
It remains to determine the dimension of the second summand.  Suppose
\[
 \sum_{i=1}^p a_i c_i=0,
 \qquad S=\sum_{i=1}^p a_i.
\]
Taking coordinate sums gives $rS=0$, hence $S=0$.  Taking the inner product with $c_j$ and using \eqref{eq:ci-products} gives
\[
 0=a_j\|c_j\|^2+\sum_{i\ne j}a_i
   =a_j(\|c_j\|^2-1)+S.
\]
Therefore $a_j=0$ unless $\|c_j\|^2=1$.  If $\|c_i\|=\|c_j\|=1$, then $c_i^{\mathsf T}c_j=1$, so $\|c_i-c_j\|^2=0$ and $c_i=c_j$.  The relation space consequently has dimension $z-1$ when $z\ge1$ and dimension zero when $z=0$.  Combining this with $\dim D_i=\rho_i-1$ proves \eqref{eq:rank-decomposition}.
\end{proof}

A co-component $\cF_i$ with $\|c_i\|^2=1$ will be called \emph{balanced}.  The next lemma combines linearity with the connectedness of the complementary component.

\begin{lemma}[Balanced co-components]\label{lem:balanced}
Every balanced co-component $\cF_i$ has
\[
 \rho_i=m_i
 \qquad\text{and}\qquad
 m_i\ge r.
\]
If $m_i=r$, then the $r$ edges in $\cF_i$ are pairwise disjoint and
\begin{equation}\label{eq:c-average}
 c_i=\frac1r\sum_{e\in\cF_i}x_e.
\end{equation}
\end{lemma}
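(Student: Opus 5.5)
The plan is to compute the Gram matrix of the co-component $\cF_i$ inside the decomposition of \cref{lem:join-rank} and then apply Perron--Frobenius theory to the disjointness graph of $\cF_i$. For $e\in\cF_i$, \cref{lem:join-rank} gives $x_e=c_i+d_e$ with $d_e\in D_i\subseteq D$ and $c_i\in D^\perp$. Hence, for $e,f\in\cF_i$,
\[
 x_e^{\mathsf T}x_f=\|c_i\|^2+d_e^{\mathsf T}d_f=1+d_e^{\mathsf T}d_f .
\]
Let $B_i$ be the adjacency matrix of the disjointness graph on $\cF_i$, i.e.\ the complement of $G$ restricted to $\cF_i$. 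This graph is connected, because $\cF_i$ is a component of $\overline G$. By \eqref{eq:gram}, the Gram matrix of $\{x_e:e\in\cF_i\}$ is
\[
 M_i=rI+(J-I-B_i)=\bigl((r-1)I-B_i\bigr)+J .
\]
Subtracting $J$ shows that $K_i:=(r-1)I-B_i$ is the Gram matrix of the vectors $d_e$. So $K_i$ is positive semidefinite, and therefore the spectral radius of $B_i$ is at most $r-1$.

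\emph{Step 1: $\rho_i=m_i$.} It suffices to show that $M_i$ is nonsingular. Suppose $M_iv=0$. Then
\[
 0=v^{\mathsf T}M_iv=v^{\mathsf T}K_iv+(\one^{\mathsf T}v)^2 .
\]
Both terms are nonnegative, so $K_iv=0$ and $\one^{\mathsf T}v=0$. The condition $K_iv=0$ says $v$ is an eigenvector of $B_i$ for the eigenvalue $r-1$. Since $r-1$ is at least the spectral radius, it is the Perron root. Because $B_i$ is irreducible (its graph is connected), that eigenspace is spanned by a strictly positive vector. This is incompatible with $\one^{\mathsf T}v=0$ unless $v=0$. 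The case $m_i=1$ is trivial.

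\emph{Step 2: $m_i\ge r$.} Since $c_i=x_e-d_e\in R_i$, write $c_i=\sum_{e\in\cF_i}a_ex_e$. Taking coordinate sums and using $\one^{\mathsf T}c_i=r$ gives $\sum_e a_e=1$. For every $f\in\cF_i$ we have $c_i^{\mathsf T}x_f=\|c_i\|^2=1$, because $d_f\perp c_i$. Hence $M_ia=\one$. Since $Ja=(\sum_e a_e)\one=\one$, this gives $K_ia=0$, i.e.\ $B_ia=(r-1)a$ with $a\ne0$. So $r-1$ is exactly the spectral radius of $B_i$. The spectral radius of a graph is at most its maximum degree, which is at most $m_i-1$. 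Therefore $m_i\ge r$.

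\emph{Step 3: the case $m_i=r$.} Now $B_i$ is a graph on $r$ vertices with spectral radius $r-1=m_i-1$. This forces $B_i$ to be complete, since equality of spectral radius and $m-1$ holds only for $K_m$. So the $r$ edges of $\cF_i$ are pairwise disjoint. The Perron vector of $K_r$ is constant, so $a$ is a constant vector with $\sum_e a_e=1$, giving $a_e=1/r$. This is \eqref{eq:c-average}.

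The main obstacle is recognizing that balancedness is equivalent to the coefficient vector $a$ being a Perron eigenvector of the disjointness graph with eigenvalue exactly $r-1$. This rests on the identity $c_i^{\mathsf T}x_f=\|c_i\|^2$ for all $f\in\cF_i$. Once that is in place, nonsingularity and the bound $m_i\ge r$ follow from standard Perron--Frobenius facts.
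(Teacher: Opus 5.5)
Your proof is correct and follows essentially the same route as the paper. Both arguments take the Gram matrix $(r-1)I-A(Q)$ of the centered vectors $x_e-c_i$, use its positive semidefiniteness, place the affine coefficient vector of $c_i$ in its kernel, and apply Perron--Frobenius on the connected disjointness graph. The one cosmetic difference is that you get linear independence from $v^{\mathsf T}M_iv=v^{\mathsf T}K_iv+(\one^{\mathsf T}v)^2$, whereas the paper takes coordinate sums of a dependency; the two are equivalent.
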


\begin{proof}
Fix a balanced component and abbreviate $c=c_i$, $m=m_i$.  Since $x_e-c\in D_i$ and $c\perp D_i$,
\[
 c^{\mathsf T}x_e=\|c\|^2=1
 \qquad(e\in\cF_i).
\]
Let $Q$ be the disjointness graph on $\cF_i$, which is the connected graph $\overline G[\cF_i]$.  Put $y_e=x_e-c$.  By \eqref{eq:gram},
\[
 y_e^{\mathsf T}y_f=
 \begin{cases}
 r-1,&e=f,\\
 0,&e\ne f\text{ and }e\cap f\ne\varnothing,\\
 -1,&e\cap f=\varnothing.
 \end{cases}
\]
Thus the Gram matrix of the vectors $y_e$ is
\begin{equation}\label{eq:centered-gram}
 K=(r-1)I-A(Q).
\end{equation}

The vector $c$ lies in the affine hull of the rows in $\cF_i$: indeed, $x_e-c\in D_i$, and $D_i$ is spanned by row differences.  Hence there are scalars $\lambda_e$ with $\sum_e\lambda_e=1$ and
\[
 c=\sum_e\lambda_e x_e.
\]
Equivalently, $\sum_e\lambda_e y_e=0$, so $K$ is singular.  Since $K$ is positive semidefinite, every eigenvalue of $A(Q)$ is at most $r-1$.  Singularity shows that $r-1$ is an eigenvalue; because $Q$ is connected, Perron--Frobenius implies that the kernel of $K$ is one-dimensional and is spanned by a vector with all entries positive.  In particular, the spectral radius of $Q$ is $r-1$, and therefore
\[
 r-1\le |V(Q)|-1=m-1,
\]
which gives $m\ge r$.

Suppose now that $\sum_e a_e x_e=0$.  Taking coordinate sums gives $\sum_ea_e=0$, so also $\sum_ea_e y_e=0$.  Therefore $a=(a_e)$ belongs to the kernel of $K$.  A nonzero vector in that kernel has all coordinates of one sign, whereas $\sum_ea_e=0$.  Thus $a=0$, proving that the rows are linearly independent and $\rho_i=m_i$.

If $m=r$, then the connected graph $Q$ has order $r$ and spectral radius $r-1$, so $Q=K_r$.  Hence the edges in $\cF_i$ are pairwise disjoint.  In this case \eqref{eq:centered-gram} is $rI-J$, whose kernel is spanned by the all-ones vector.  The affine coefficients above are therefore all $1/r$, which proves \eqref{eq:c-average}.
\end{proof}

Balanced co-components of minimum size are parallel classes.  Their number is bounded by an orthogonal-dimension argument.

\begin{lemma}[Parallel-class bound]\label{lem:parallel-bound}
At a fixed join node, at most $r+1$ balanced co-components can have size exactly $r$.
\end{lemma}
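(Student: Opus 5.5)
By \cref{lem:join-rank}, all balanced vectors $c_i$ coincide, so I would first show that every balanced co-component of size exactly $r$ determines the same $r^2$-point set. I would then count dimensions inside the coordinate-sum-zero subspace supported on that set, using the mutual orthogonality of the row-difference spaces $D_i$.

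\emph{Step 1 (common support).} Let $\cF_{i_1},\dots,\cF_{i_k}$ be the balanced co-components of size $r$. By \cref{lem:balanced}, each $\cF_{i_t}$ is a parallel class of $r$ pairwise disjoint $r$-sets. By \eqref{eq:c-average},
\[
 c_{i_t}=\tfrac1r\,\one_{U_t},\qquad U_t=\bigcup_{e\in\cF_{i_t}}e,\qquad |U_t|=r^2 .
\]
The last clause of \cref{lem:join-rank} gives $c_{i_1}=\cdots=c_{i_k}=:c$. Hence all the $U_t$ equal a single set $U$ with $|U|=r^2$, and $c=\frac1r\one_U$.

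\emph{Step 2 (dimension of each contribution).} For $e\in\cF_{i_t}$, put $y_e=x_e-c$. This vector lies in $D_{i_t}$ by the proof of \cref{lem:join-rank}. It is supported on $U$, because $e\subseteq U$, and it has coordinate sum $r-r=0$. By \eqref{eq:centered-gram} with $Q=K_r$, the Gram matrix of $\{y_e:e\in\cF_{i_t}\}$ is $rI-J$, which has rank $r-1$. So these vectors span a subspace $W_t\subseteq D_{i_t}$ of dimension $r-1$.

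\emph{Step 3 (counting).} The spaces $D_i$ are mutually orthogonal, so $W_1,\dots,W_k$ are mutually orthogonal. They all lie in
\[
 \{v\in\R^{V}:\operatorname{supp}v\subseteq U,\ \one^{\mathsf T}v=0\},
\]
which has dimension $r^2-1$. Therefore $k(r-1)\le r^2-1$. Since $r\ge2$, dividing by $r-1$ gives $k\le r+1$.

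\emph{Main obstacle.} The only delicate point is Step 1: recognising that the equality of the balanced projections $c_i$ forces all the parallel classes onto one common $r^2$-set. This is exactly what the final assertion of \cref{lem:join-rank} together with \eqref{eq:c-average} provides. After that, the argument is a routine orthogonal dimension count, and I do not expect to need the cross-intersection property of edges from different classes beyond what is already encoded in the orthogonality of the $D_i$.
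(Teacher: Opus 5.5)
Your proposal is correct and follows the paper's proof essentially step for step: you obtain the common support $U$ with $c=\frac1r\one_U$, form the $(r-1)$-dimensional spaces spanned by the vectors $x_e-c$, and count dimensions of mutually orthogonal subspaces inside the $(r^2-1)$-dimensional sum-zero space on $U$. The only differences are cosmetic. You get orthogonality from $W_t\subseteq D_{i_t}$ and the dimension from the Gram matrix $rI-J$. The paper instead computes the cross inner products $1-1-1+1=0$ directly and reads off the dimension from the partition of $U$.
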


\begin{proof}
Let $\cF_1,\ldots,\cF_a$ be the balanced co-components of size $r$.  By Lemma~\ref{lem:join-rank}, their vectors $c_i$ are equal to a common vector $c$.  By Lemma~\ref{lem:balanced}, each $\cF_i$ consists of $r$ pairwise disjoint $r$-sets and
\[
 c=\frac1r\sum_{e\in\cF_i}x_e.
\]
Consequently every $\cF_i$ partitions the same set $U$ of $r^2$ points, and $c=r^{-1}\one_U$.  Edges from distinct classes meet in exactly one point.

For each $i$, let
\[
 W_i=\operatorname{span}\left\{x_e-\frac1r\one_U:e\in\cF_i\right\}.
\]
Since the $r$ edges in $\cF_i$ partition $U$, the space $W_i$ has dimension $r-1$ and lies in $\one_U^\perp$.  If $e\in\cF_i$ and $f\in\cF_j$ with $i\ne j$, then
\[
 \left(x_e-\frac1r\one_U\right)^{\mathsf T}
 \left(x_f-\frac1r\one_U\right)
 =1-1-1+1=0.
\]
Thus the spaces $W_1,\ldots,W_a$ are mutually orthogonal subspaces of the $(r^2-1)$-dimensional space $\one_U^\perp$.  Hence
\[
 a(r-1)\le r^2-1=(r-1)(r+1),
\]
and $a\le r+1$.
\end{proof}

\section{Proof of the rank theorem}

The proof is by induction on $m=|E(H)|$.  Vertices not contained in any edge may be deleted, so only the family of incidence row vectors is relevant.  Write
\[
 \rho(H)=\rank N(H),
 \qquad
 \sigma(H)=(r+1)\rho(H)-r|E(H)|.
\]

If $m=0$, the statement is immediate.  If $m=1$, then $\rho(H)=1$ and $\sigma(H)=1$, so the inequality is strict.  Suppose next that $L(H)$ is disconnected, with edge families $\cF^{(1)},\ldots,\cF^{(s)}$.  Distinct line-graph components have disjoint vertex supports, so the corresponding row spaces are supported on disjoint coordinate sets.  Therefore
\[
 \rho(H)=\sum_{j=1}^s\rho(\cF^{(j)}),
 \qquad
 \sigma(H)=\sum_{j=1}^s\sigma(\cF^{(j)}).
\]
The induction hypothesis proves $\sigma(H)\ge0$.

Now assume that $L(H)$ is connected and $m\ge2$.  Use the notation of the preceding section for the complement components $\cF_1,\ldots,\cF_p$, and let $B$ be the set of balanced indices, with $z=|B|$.

If $z\le1$, Lemma~\ref{lem:join-rank} and induction give
\[
 \rho(H)=\sum_i\rho_i
 \ge \frac{r}{r+1}\sum_i m_i
 =\frac{rm}{r+1}.
\]

Suppose $z\ge2$.  Balanced components have full row rank by Lemma~\ref{lem:balanced}.  Hence Lemma~\ref{lem:join-rank} gives
\begin{align*}
 \rho(H)
 &=\sum_{i\notin B}\rho_i+\sum_{i\in B}m_i-z+1\\
 &\ge \frac{r}{r+1}\sum_{i\notin B}m_i
      +\sum_{i\in B}m_i-z+1.
\end{align*}
It is enough to prove
\begin{equation}\label{eq:balanced-mass}
 \sum_{i\in B}m_i\ge(r+1)(z-1).
\end{equation}
Let $a$ be the number of balanced components of size $r$.  By Lemma~\ref{lem:balanced}, every other balanced component has size at least $r+1$, and by Lemma~\ref{lem:parallel-bound}, $a\le r+1$.  Therefore
\[
 \sum_{i\in B}m_i
 \ge ar+(z-a)(r+1)
 =(r+1)z-a
 \ge(r+1)(z-1),
\]
proving \eqref{eq:balanced-mass} and hence \eqref{eq:rank-bound}.

It remains to classify equality, again by induction on $m$.  If the line graph is disconnected, equality $\sigma(H)=0$ holds if and only if equality holds in every line-graph component.  The induction hypothesis then gives a vertex-disjoint union of $S(2,r,r^2)$ systems.

Assume that the line graph is connected.  If $z\le1$, the exact rank formula gives
\[
 \sigma(H)=\sum_{i=1}^p\sigma(\cF_i).
\]
Thus equality would force equality in every $\cF_i$.  By induction, every nonempty line-graph component inside each $\cF_i$ is an $S(2,r,r^2)$.  Choose one such design inside $\cF_1$ and any edge in $\cF_2$.  Because $\cF_1$ and $\cF_2$ are joined, that edge intersects every block of the design, contradicting Lemma~\ref{lem:no-transversal}.  Hence equality is impossible when $z\le1$.

Finally, suppose $z\ge2$.  The exact formula is
\begin{equation}\label{eq:sigma-exact}
 \sigma(H)
 =\sum_{i\notin B}\sigma(\cF_i)
  +\sum_{i\in B}m_i-(r+1)(z-1).
\end{equation}
The estimate used for \eqref{eq:balanced-mass} gives the stronger inequality
\[
 \sum_{i\in B}m_i-(r+1)(z-1)\ge r+1-a\ge0,
\]
where $a$ is the number of balanced components of size $r$.  If $\sigma(H)=0$, then $a=r+1$.  These $r+1$ minimal balanced components are $r+1$ parallel classes on a common set $U$ of $r^2$ points.  Every two blocks in different classes meet once, while blocks in the same class are disjoint.  The total number of point pairs contained in their blocks is
\[
 r(r+1)\binom r2=\binom{r^2}{2}.
\]
Linearity implies that no pair is counted twice, so every pair of points of $U$ lies in exactly one block.  The resulting family is an $S(2,r,r^2)$.

There can be no additional edge in any other complement component: such an edge would intersect every block of this $S(2,r,r^2)$, contradicting Lemma~\ref{lem:no-transversal}.  Hence the connected equality case is exactly one $S(2,r,r^2)$.  Conversely, the incidence matrix of an $S(2,r,r^2)$ has full column rank because
\[
 N^{\mathsf T}N=rI+J,
\]
and therefore its rank is $r^2$.  Thus equality holds in \eqref{eq:rank-bound}, proving Theorem~\ref{thm:rank}.

\section{Deduction of the Tur\'an bound}

Let $H$ be a linear $r$-uniform $P_4^r$-free hypergraph with $m$ edges and $n$ vertices.  By \cref{lem:line-cograph}, its line graph is a cograph.  The incidence-rank theorem gives
\[
 \frac{rm}{r+1}\le\rank N(H)\le n,
\]
which is equivalent to \eqref{eq:main-bound}.

If equality holds in \eqref{eq:main-bound}, then
\[
 n=\rank N(H)=\frac{rm}{r+1}.
\]
Thus equality also holds in \cref{thm:rank}, and the edge-containing components are vertex-disjoint copies of $S(2,r,r^2)$.  The equality $n=\rank N(H)$ rules out isolated vertices.  Conversely, every vertex-disjoint union of $S(2,r,r^2)$ systems is $P_4^r$-free and has edge density $(r+1)/r$.  This proves \cref{thm:main}.

\begin{corollary}\label{cor:extremal}
For every $r\ge2$ and every positive integer $n$,
\[
 \exlin_r(n,P_4^r)\le\frac{r+1}{r}n.
\]
Equality holds if and only if an $n$-point set can be partitioned into supports of Steiner systems $S(2,r,r^2)$.  Equivalently, equality holds precisely when $r^2\mid n$ and an $S(2,r,r^2)$ exists.
\end{corollary}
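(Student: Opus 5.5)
The corollary follows from \cref{thm:main} together with a small combinatorial translation of its equality clause. For the upper bound, apply \eqref{eq:main-bound} to an extremal $n$-vertex linear $P_4^r$-free $r$-graph. For the equality clause, suppose $\exlin_r(n,P_4^r)=\frac{r+1}{r}n$. Then some $n$-vertex hypergraph attains equality in \eqref{eq:main-bound}. By the equality part of \cref{thm:main}, it has no isolated vertices and each component is an $S(2,r,r^2)$, so the $n$ points are partitioned into the supports of these designs. Conversely, given such a partition, take the vertex-disjoint union of the designs. It is linear and $P_4^r$-free, since every $P_4^r$ is connected and so would lie inside one design, and each design is $P_4^r$-free by the argument after \cref{thm:main}. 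Its edge count is $\sum r(r+1)=\frac{r+1}{r}\cdot n$ because the supports cover all $n$ points.

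It remains to see that the partition condition is equivalent to ``$r^2\mid n$ and an $S(2,r,r^2)$ exists.'' Every $S(2,r,r^2)$ has exactly $r^2$ points, so a partition into supports forces $r^2\mid n$ and requires at least one design to exist, provided $n\ge1$. Conversely, if $r^2\mid n$ and a design exists, take $n/r^2$ disjoint copies.

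I expect no real obstacle. The only points needing care are two interpretive ones. First, $\exlin$ must be read as a maximum over all $n$-vertex hypergraphs, so equality of the extremal number means some hypergraph attains the bound. Second, if $r\nmid n$ the bound $\frac{r+1}{r}n$ need not even be an integer, and then equality fails, which is consistent with the criterion since $r^2\nmid n$.
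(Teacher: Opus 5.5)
Your proposal is correct and follows the paper's route. The paper states the corollary as an immediate consequence of \cref{thm:main} (the bound together with its equality characterization) without a separate argument, and you spell out the routine steps it leaves implicit: the maximum is attained by some $n$-vertex hypergraph; a vertex-disjoint union of $S(2,r,r^2)$'s is linear, $P_4^r$-free, and has $\frac{r+1}{r}n$ edges; and a partition into $r^2$-point supports is equivalent to $r^2\mid n$ together with the existence of one design.
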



\begin{thebibliography}{99}

\bibitem{AdakVerma2026}
R.~Adak and P.~Verma,
\newblock Linear Tur\'an numbers of uniform hypertrees,
\newblock arXiv:2607.16854, 2026.

\bibitem{CollierCartainoGraberJiang2018}
C.~Collier-Cartaino, N.~Graber, and T.~Jiang,
\newblock Linear Tur\'an numbers of linear cycles and cycle-complete Ramsey numbers,
\newblock \emph{Combinatorics, Probability and Computing} 27 (2018), 358--386.

\bibitem{ErdosGallai1959}
P.~Erd\H{o}s and T.~Gallai,
\newblock On maximal paths and circuits of graphs,
\newblock \emph{Acta Mathematica Academiae Scientiarum Hungaricae} 10 (1959), 337--356.

\bibitem{Ghorbani2019}
E.~Ghorbani,
\newblock Cographs: eigenvalues and Dilworth number,
\newblock \emph{Discrete Mathematics} 342 (2019), 2797--2803.

\bibitem{GyarfasRuszinkoSarkozy2022}
A.~Gy\'arf\'as, M.~Ruszink\'o, and G.~N.~S\'ark\"ozy,
\newblock Linear Tur\'an numbers of acyclic triple systems,
\newblock \emph{European Journal of Combinatorics} 99 (2022), Article 103435.

\bibitem{JacobsTrevisanTura2018}
D.~P.~Jacobs, V.~Trevisan, and F.~C.~Tura,
\newblock Eigenvalue location in cographs,
\newblock \emph{Discrete Applied Mathematics} 245 (2018), 220--235.

\bibitem{Royle2003}
G.~F.~Royle,
\newblock The rank of a cograph,
\newblock \emph{Electronic Journal of Combinatorics} 10 (2003), Note N11.

\bibitem{Seinsche1974}
D.~Seinsche,
\newblock On a property of the class of $n$-colorable graphs,
\newblock \emph{Journal of Combinatorial Theory, Series B} 16 (1974), 191--193.

\bibitem{ZhangWang2026}
L.-P.~Zhang and L.-G.~Wang,
\newblock The linear Tur\'an numbers of acyclic linear $4$-graphs,
\newblock \emph{Acta Mathematicae Applicatae Sinica, English Series} 42 (2026), 829--841.

\end{thebibliography}
\end{document}